\numberwithin{equation}{section} 
\newtheorem{thm}{Theorem}[section]
\newtheorem{cor}[thm]{Corollary}
\newtheorem{lem}[thm]{Lemma}
\newtheorem{prop}[thm]{Proposition}
\newtheorem{quest}[thm]{Question}
\theoremstyle{definition}
\newtheorem{definition}[thm]{Definition}
\theoremstyle{remark}
\newtheorem{rem}[thm]{Remark}
\numberwithin{equation}{section}
\newcommand{\PP}{\mathbb{P}}
\newcommand{\FF}{\mathbb{F}}
\newcommand{\FFbar}{\overline{\mathbb{F}}}
\newcommand{\Qell}{\mathbb{Q}_{\ell}}
\newcommand{\Ccal}{\mathcal{C}}
\newcommand{\Hetc}[2]{H^{#1}_{\text{\'et},c}(#2,\Qell)}
\newcommand{\Frob}{F}
\newcommand{\Pic}{\mathrm{Pic}}
\begin{document}

\title{Seven points in general linear position}%
\author{Olof Bergvall}%
\address{Department of Electronics, Mathematics and Natural Sciences, University of Gävle, Kungsbäcksvägen 47,
80176 Gävle, Sweden}
\email{olof.bergvall@hig.se}

\begin{abstract}
We determine the cohomology groups of the space of seven points in general linear position
as representations of the symmetric group on seven elements by making equivariant point
counts over finite fields. We also comment on the case of eight points.
\end{abstract}

\maketitle

\section{Introduction}
Given a variety $X$, it is very natural to consider $m$-tuples $(P_1,\ldots, P_m)$ of points
on $X$. If no condition is placed on the tuples, the space of these tuples is $X^m$
and the space of tuples such that the points are distinct is the configuration space $C_m(X)$.
If the order of the points is irrelevant one instead arrives at the symmetric product $S^m(X)$
and the unordered configuration space $B_m(X)$, respectively.
The ordered and unordered spaces are naturally related through the action of the symmetric group
$S_m$ permuting the points. 

It is both possible and interesting to pose more refined requirements on the $m$-tuples.
For instance, tuples of points in general position, i.e. such that there
is no ``unexpected'' subvariety containing the points, have been studied
extensively in the classical literature. See the book \cite{dolgachevortland}
of Dolgachev and Ortland for modern account of this topic and for further references.
Tuples of points in general position also have close connections to
various moduli spaces of curves, surfaces and abelian varieties, see e.g.
\cite{bergstrombergvall}, \cite{bergvall_gd} and \cite{looijenga}.
We also mention that spaces of $m$-tuples in general position in $\PP^2$ have
been studied from a cohomological point of view by Gounelas
and the author when $m$ is at most $7$,
see \cite{bergvall_gd}, \cite{bergvall_pts} and \cite{bergvallgounelas}. 
The techniques employ both point counts over finite fields and purity arguments,
two topics which will be discussed further in the present paper.

In $\PP^n$ it is also natural to consider $m$-tuples of points in general
\emph{linear} position. In other words, we require that no subset of $n+1$ points
should lie on a hyperplane. We denote the space of $m$-tuples of points
in $\PP^n$ which are in general linear position by $C_{n,m}$ and its quotient
by $\mathrm{PGL}(n+1)$ by $\Ccal_{n,m}$.  \footnote{To avoid technical subtleties we assume
$m \geq n+2$.}
In fact, $C_{m,n} \cong \mathrm{PGL}(n+1) \times \Ccal_{n,m}$
so for many purposes it is merely a matter of convenience (or preference) which one to use.
In this work, the spaces $\Ccal_{n,m}$ are more convenient since they (sometimes) satisfy
certain cohomological purity properties which the spaces $C_{m,n}$ do not satisfy.
Also the spaces $\Ccal_{n,m}$ have received considerable attention classically, see e.g. the work of
Dolgachev and Ortland \cite{dolgachevortland}.
From a more modern perspective, the spaces $\Ccal_{n,m}$ can be viewed as
natural generalizations of the moduli spaces $\mathcal{M}_{0,n}$ of
$n$-pointed rational curves. In fact, sending a $(n+3)$-tuple of points in $\PP^n$
to the isomorphism class of the rational normal curve passing through them
gives an isomorphism $\Ccal_{n,n+3} \cong \mathcal{M}_{0,n+3}$. The
spaces $\Ccal_{n,m}$, especially the case $n=2$, have also been studied 
extensively from the perspective of coding theory starting with the work of
Glynn \cite{glynn} and continued in the work of Iampolskaia, Skorobogatov and Sorokin
\cite{iampolskaiaetal} and Kaplan et al \cite{kaplanetal}. In particular,
the number $|\Ccal_{2,m}(\FF_q)|$ of $m$-tuples of points in general linear position
in the projective plane over the finite field $\FF_q$ with $q$ elements is known
for all $q$ and $m<10$ through these works. 

Through Lefschetz trace formula, these point counts determine
the Euler characterics of the spaces $\Ccal_{n,m}$. By also
considering the action of the group $S_m$ these Euler characteristics can
be decomposed into virtual representations of $S_m$. From an arithmetic
point of view, this allows one to obtain more refined enumerative information
about the counted objects. For an example in this direction, see the recent paper of Das
\cite{das}. From a more topological or representation theoretic perspective,
the $S_m$-equivariant information encodes cohomological information about
quotients of $\Ccal_{n,m}$ by subgroups of $S_m$ - in particular, one
obtains the Euler characteristic of the ``unordered space'' $\Ccal_{n,m}/S_m$.

If $m \leq 6$ it has been shown, through work of 
Gounelas and the author \cite{bergvallgounelas} and Das and O'Connor  \cite{dasoconnor}
that $\Ccal_{2,m}$ satisfies a strong condition called minimal purity.
This purity condition implies that the $S_m$-equivariant point counts
of $\Ccal_{2,m}$ do not only determine the Euler characteristic as
a virtual representation of $S_m$ but actually determine each cohomology
group $H^i(\Ccal_{2,m})$ as a representation of $S_m$.

The purpose of the present paper is to extend these results to the case
of $7$ points. Our main result is the following.

\begin{thm}
\label{27thm}
 The cohomology groups $H^k(\Ccal_{2,7})$ of the space $\Ccal_{2,7}$ 
 of seven points in general linear position are pure of Hodge-Tate type
 $(k,k)$. The cohomology groups are described explicitly in
 Table~\ref{cohtab} as representations of the symmetric group $S_7$.
\end{thm}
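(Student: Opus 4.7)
The plan is to follow the two-step template already established for $\Ccal_{2,m}$ with $m \le 6$: first establish that $H^k(\Ccal_{2,7})$ is pure of Hodge--Tate type $(k,k)$, and then extract the $S_7$-character of each $H^k$ from $S_7$-equivariant point counts via the Grothendieck--Lefschetz trace formula.

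\textbf{Purity.} My first route would be the forgetful morphism $\pi\colon \Ccal_{2,7}\to\Ccal_{2,6}$, whose fibre over a configuration $(P_1,\ldots,P_6)$ is the complement in $\PP^2$ of the $\binom{6}{2}$ lines through pairs of the $P_i$. Such a fibre is a hyperplane arrangement complement and so has pure Hodge--Tate cohomology of type $(k,k)$ by Orlik--Solomon, while purity of the base $\Ccal_{2,6}$ was established by Bergvall--Gounelas and Das--O'Connor. Combining these via the Leray spectral sequence would yield purity of $\Ccal_{2,7}$, provided one controls the monodromy of the higher direct images $R^i\pi_*\Qell$. If this turns out to be delicate, an alternative is a direct analysis of an $S_7$-equivariant smooth compactification with simple normal crossings boundary---coming, for instance, from the del Pezzo geometry of the blowup of $\PP^2$ at seven points in general linear position---via the weight spectral sequence of the boundary stratification.

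\textbf{Equivariant point counts and extraction of the character.} Granted purity, Frobenius acts on $H^k(\Ccal_{2,7})$ as $q^k$, and Poincar\'e duality combined with the Lefschetz trace formula gives, for each $\sigma\in S_7$ and $d=\dim\Ccal_{2,7}=6$,
\begin{equation*}
\bigl|\Ccal_{2,7}^{\sigma F}(\FFbar_q)\bigr| \;=\; \sum_{k}(-1)^k\, q^{d-k}\,\chi_{H^k(\Ccal_{2,7})}(\sigma).
\end{equation*}
Each character value is therefore read off as a coefficient of the polynomial on the left, which is computed cycle type by cycle type. Concretely, a $\sigma F$-fixed configuration is one whose Frobenius orbits match the cycle structure of $\sigma$, and each count reduces to enumerating points of $\PP^2$ over the extensions $\FF_{q^d}$ with $d$ matching the cycle lengths of $\sigma$, with general linear position imposed by inclusion--exclusion over collinear subsets. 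The trivial conjugacy class reproduces the classical count of Glynn and Kaplan et al.\ and serves as both a template and a consistency check. Running through the remaining conjugacy classes of $S_7$ and decomposing the resulting characters into irreducibles fills in Table~\ref{cohtab}.

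\textbf{Main obstacle.} The hard step is clearly the purity statement, since the point counts, though extensive, are essentially mechanical. The Leray argument above succeeds only if the local systems $R^i\pi_*\Qell$ decompose into Tate twists after pullback along a suitable cover, which need not hold automatically for configuration space fibrations, so some genuine geometric input will be required. A useful a posteriori check is that, assuming the claimed purity, the equivariant counts must be polynomials in $q$ of degree at most $d$ whose coefficients decompose into non-negative integer combinations of irreducible characters of $S_7$; this provides strong numerical feedback while the geometric argument is being settled.
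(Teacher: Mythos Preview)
Your point-counting half is exactly what the paper does, but the purity step has a real gap. The forgetful map $\pi\colon \Ccal_{2,7}\to\Ccal_{2,6}$ is not a fibration: the fibre over $(P_1,\ldots,P_6)$ is the complement in $\PP^2$ of the fifteen lines $L_{ij}$, and the intersection lattice of this arrangement jumps along divisors in $\Ccal_{2,6}$ (for example where $L_{12},L_{34},L_{56}$ become concurrent, which is perfectly compatible with general linear position of the six points). So the sheaves $R^q\pi_*\Qell$ are not even locally constant on $\Ccal_{2,6}$, and the problem is worse than ``controlling monodromy''---the Betti numbers of the fibre actually change. One can of course stratify the base, but then you are doing a weight/Gysin analysis over a stratification, which is essentially your fallback plan and no longer a clean Leray argument.

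The paper takes a genuinely different route for purity. Rather than compactifying or fibering, it constructs an $\FF_q$-rational, $S_7$-equivariant \emph{finite cover} $\varphi\colon X\to\Ccal_{2,7}$ of degree $21$, where $X$ parametrises the blown-up weak del Pezzo surface together with a reducible anticanonical curve (a line plus a conic through the seven points, the line carrying two of them). The point is that $X$ decomposes as $X^{\mathrm{gen}}\sqcup X^{\mathrm{tan}}$ according to whether the line meets the conic transversally or tangentially; following Looijenga, $X^{\mathrm{gen}}$ is identified with a disjoint union of complements of toric arrangements and $X^{\mathrm{tan}}$ with projectivised hyperplane-arrangement complements sitting as the ``tangent cone'' of the former. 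Looijenga's Lemma then gives $\Hetc{k}{X}\hookrightarrow\Hetc{k}{X^{\mathrm{gen}}}$, and since toric arrangement complements are minimally pure, so is $X$; the finite cover transfers this to $\Ccal_{2,7}$ via $\Hetc{k}{\Ccal_{2,7}}\hookrightarrow\Hetc{k}{X}$. This bypasses entirely the variation-of-fibre problem that blocks your first approach, and is also more structured than a bare weight-spectral-sequence computation on a compactification. Your instinct that del Pezzo geometry is the right input is correct, but the mechanism is a finite cover by a toric-arrangement complement rather than a boundary stratification.
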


\begin{rem}
 In most of the paper, we work over a field of positive (odd) characteristic
 and use compactly supported étale cohomology. We have chosen to formulate the
 result in characteristic 0 and using ordinary (de Rham) cohomology since it
 may be less intimidating to some readers. One reaches the present form of the result
 via a standard argument using Artin's comparison theorem (as well as constructibility and base change)
 and Poincaré duality. The precise formulations in positive characteristic is given later
 in the paper.
\end{rem}

We prove Theorem~\ref{27thm} essentially by first relating the space $\Ccal_{2,7}$ to
the complement of a certain toric arrangement (the cohomology of such complements
have been studied extensively) and then counting points of $\Ccal_{2,7}$ over finite fields
and applying Lefschetz trace formula.

We also discuss the case of $8$ points. In this case, we are able to prove the following.

\begin{thm}
 The cohomology groups $H^k(\Ccal_{2,8})$ of the space $\Ccal_{2,8}$ 
 of eight points in general position are pure of Hodge-Tate type
 $(k,k)$.
\end{thm}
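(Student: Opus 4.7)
The plan is to deduce purity for $\Ccal_{2,8}$ from purity for $\Ccal_{2,7}$ via the forgetful morphism $\pi \colon \Ccal_{2,8} \to \Ccal_{2,7}$ that drops the last marked point. Over a configuration $(p_1,\ldots,p_7) \in \Ccal_{2,7}$, the fiber of $\pi$ consists of those $p \in \PP^2$ such that $(p_1,\ldots,p_7,p)$ is still in general linear position; this is precisely the complement in $\PP^2$ of the union of the $\binom{7}{2}=21$ lines through pairs of the $p_i$. Hence $\pi$ is a smooth surjection whose fibers are complements of line arrangements in $\PP^2$.

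By Theorem~\ref{27thm}, each $H^k(\Ccal_{2,7})$ is pure of Hodge-Tate type $(k,k)$. By Brieskorn-Orlik-Solomon theory, the cohomology of the complement of any line arrangement in $\PP^2$ is generated in degree one by logarithmic differentials along the lines, and is therefore also pure of Hodge-Tate type $(k,k)$. The idea is then to combine these two purity statements via the Leray spectral sequence
\[
E_2^{p,q} = H^p\bigl(\Ccal_{2,7},\, R^q \pi_* \Qell\bigr) \Longrightarrow H^{p+q}(\Ccal_{2,8}).
\]
If one can show that $R^q \pi_* \Qell$ is pure of weight $2q$ and that the spectral sequence degenerates at $E_2$, then each $H^{p+q}(\Ccal_{2,8})$ is pure of weight $2(p+q)$, which is the desired conclusion.

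The main obstacle is that the combinatorial type of the $21$-line arrangement is not constant as $(p_1,\ldots,p_7)$ varies over $\Ccal_{2,7}$: at certain special configurations, additional concurrences among the lines (Pappus-type phenomena and their analogues) occur, so $R^q \pi_* \Qell$ is only lisse on an open dense subset of the base. To circumvent this I would stratify $\Ccal_{2,7}$ by the combinatorial type of the arrangement, verify purity of the expected weight on each locally closed stratum, and then glue the pieces using the long exact sequence associated with an open-closed decomposition together with a weight argument, exploiting the fact that weights behave functorially under such excision sequences.

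An alternative, more arithmetic route mirrors the strategy used for Theorem~\ref{27thm}: count $|\Ccal_{2,8}(\FF_{q^n})|$ by combining the equivariant count of Theorem~\ref{27thm} with a fibrewise count of the complement of $21$ lines in $\PP^2(\FF_{q^n})$, show that the result is polynomial in $q^n$, and invoke a standard purity criterion for smooth varieties whose point count is polynomial. Since only purity, and not the full $S_8$-representation structure, is asserted here, this arithmetic route may in the end be the most efficient path to the conclusion.
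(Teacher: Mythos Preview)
Your proposal addresses the wrong space. The theorem, despite the notation $\Ccal_{2,8}$, is about eight points in \emph{general position} (no three collinear, no six on a conic, and the eight not on a singular cubic with its singularity at one of them); your forgetful map with fibers equal to the complement of $21$ lines describes instead the space of eight points in \emph{general linear position}. The paper treats these separately in Section~4, proves purity only for the general-position space $\Ccal_{2,8}^{\mathrm{gp}}$, and explicitly explains why its method does not settle the general-linear-position space, which is left open.

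Even for the space you intended, both routes have real gaps. In the Leray/stratification approach, purity on each locally closed stratum does not glue to purity on the union: the open--closed long exact sequence need not split by weight when the open and closed pieces contribute in the same weight, so your ``weight argument'' is not available in general. In the arithmetic approach, polynomial point count does not imply minimal purity for a smooth variety; there is no such ``standard purity criterion'' without substantial additional input. In the paper the logic runs the other way: purity is established first by geometric means, and only then are point counts used (via Lefschetz) to read off the cohomology.

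The paper's actual argument is of an entirely different nature. Blowing up $\PP^2$ at the eight points yields a geometrically marked Del Pezzo surface of degree $1$; one rigidifies further by choosing a singular anticanonical curve, obtaining a finite $\FF_q$-rational cover $X \to \Ccal_{2,8}^{\mathrm{gp}}$. Splitting $X = X^{\mathrm{node}} \sqcup X^{\mathrm{cusp}}$ according to whether the image cubic is nodal or cuspidal, results of Looijenga identify $X^{\mathrm{node}}$ with the complement of a toric arrangement and $X^{\mathrm{cusp}}$ with a projectivized hyperplane-arrangement complement, in such a way that $\Hetc{k}{X} \hookrightarrow \Hetc{k}{X^{\mathrm{node}}}$. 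Toric arrangement complements are minimally pure, and the finite cover gives an injection $\Hetc{k}{\Ccal_{2,8}^{\mathrm{gp}}} \hookrightarrow \Hetc{k}{X}$, whence purity.
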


Our method
fails to prove the analogous result for the space of $8$ points in general linear position.
This suggests that the transition in mixed Hodge structure from minimal purity to more general 
behaviour occurs between $7$ and $8$ points for points in general linear position
while the transition for points in general position seems to be between $8$ and $9$.
To decide whether this is actually the case would of course be very interesting.
More generally, we have the following question.

\begin{quest}
\label{question}
 For which integers $m$ and $n$ and for which generality conditions $gc$
 is the space $\Ccal_{n,m}^{gc}$ of $m$-tuples of points in $\PP^n$
 satisfying $gc$ minimally pure? 
\end{quest}

We also remark that Propositions~\ref{27minpure_prop} and \ref{28minpure_prop}, especially the proofs,
are closely related to moduli of surfaces with anticanonical cycles.
Such moduli spaces have been studied quite extensively, e.g. in the work \cite{looijenga81}
and, more recently, by Gross, Hacking and Keel \cite{grosshackingkeel}. The present work
sheds new light on the cohomology of some of these spaces. We also expect the present paper
to be of use in the pursuit of counting pairs of curves with prescribed intersection,
see e.g. the recent paper of Kaplan and Matei, \cite{kaplanmatei}.


\section{Background}
Let $p$ be an odd prime number, let $n\geq 1$ be an integer and let $q=p^n$.
Let $\FF_q$ be a finite field with $q$ elements, let
$\FF_{q^m}$ be a degree $m$ extension of $\FF_q$ and
let $\FFbar_{q}$ be an algebraic closure of $\FF_{q}$.
Let $X$ be a scheme defined over $\FF_{q}$, let $\overline{X}=X\otimes \FFbar_q$ and 
let $\Frob$ denote its geometric Frobenius endomorphism.
Finally, let $\ell$ be prime number different from $p$ and
let $\Hetc{k}{X}$ denote the $k$-th compactly supported étale cohomology
group of $X$ with coefficients in $\Qell$.

If $\Gamma$ is a finite group acting on $X$ by $\FF_q$-rational automorphisms, then each cohomology group 
$\Hetc{k}{X}$ is a $\Gamma$-representation. Lefschetz trace formula allows us to obtain information about these representations by counting the number of fixed points of 
$\Frob \sigma$ for different $\sigma \in \Gamma$.
 
\begin{thm}[Lefschetz trace formula]
\label{lefschetz}
 Let $X$ be a separated scheme of finite type over 
 $\FF_q$ with Frobenius endomorphism $\Frob$ and let $\sigma$
 be a rational automorphism of $X$ of finite order. Then
 \begin{equation*}
  |\overline{X}^{\Frob \sigma}| = 
  \sum_{k \geq 0} (-1)^k \cdot \mathrm{Tr}\left(\Frob \sigma, \Hetc{k}{\overline{X}} \right),
 \end{equation*}
 where $\overline{X}^{\Frob \sigma}$ denotes the fixed point set of $\Frob \sigma$.
\end{thm}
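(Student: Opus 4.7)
The plan is to reduce this equivariant statement to Grothendieck's (non-equivariant) Lefschetz trace formula via a twisting construction. Recall that Grothendieck's theorem asserts, for any separated scheme $Y$ of finite type over $\FF_q$, that
\begin{equation*}
|Y(\FF_q)| = \sum_{k \geq 0} (-1)^k \, \mathrm{Tr}\!\left(\Frob, \Hetc{k}{\overline{Y}}\right).
\end{equation*}
My goal is to construct an $\FF_q$-form $X_\sigma$ of the $\FFbar_q$-scheme $\overline{X}$ whose geometric Frobenius is identified with $\Frob\sigma$, and then apply this formula to $X_\sigma$.

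To build $X_\sigma$ I use that $\sigma$ is $\FF_q$-rational (so it commutes with $\Frob$) and has finite order $d$. The original $\FF_q$-structure on $\overline{X}$ is encoded by the descent datum generated by $\Frob$; modifying this cocycle by the character sending $\Frob$ to $\sigma$ produces a new descent datum generated by $\Frob\sigma$. Since $(\Frob\sigma)^{dN} = \Frob^{dN}$ for sufficiently large $N$, and $X$ is separated of finite type, this twisted datum is effective by standard Galois descent (concretely, one realizes $X_\sigma$ as a suitable quotient of $X \times_{\FF_q} \FF_{q^d}$ by the diagonal action of $\mathrm{Gal}(\FF_{q^d}/\FF_q)$ twisted by $\sigma$). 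This yields an $\FF_q$-scheme $X_\sigma$ together with an $\FFbar_q$-isomorphism $\overline{X_\sigma} \cong \overline{X}$ under which the geometric Frobenius of $X_\sigma$ corresponds to $\Frob\sigma$.

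By construction, $X_\sigma(\FF_q) = \overline{X}^{\Frob\sigma}$, and functoriality of compactly supported étale cohomology supplies an isomorphism $\Hetc{k}{\overline{X_\sigma}} \cong \Hetc{k}{\overline{X}}$ intertwining the Frobenius on the left with $\Frob\sigma$ on the right. Substituting into Grothendieck's formula applied to $X_\sigma$ gives the claimed identity. The main obstacle is the precise construction of the twist $X_\sigma$ together with the verification that the induced identification on $\ell$-adic cohomology is Frobenius-equivariant in the required sense; both are standard descent arguments but need to be carried out carefully, and they are precisely the steps where the hypotheses that $\sigma$ be $\FF_q$-rational and of finite order enter.
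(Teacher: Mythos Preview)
The paper does not actually prove this theorem; it is simply quoted as a standard background result (the Grothendieck--Lefschetz trace formula in its equivariant form), so there is no argument in the paper to compare against.

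Your reduction to the non-equivariant Grothendieck formula via twisting is the classical and correct approach (this is, for instance, how the statement is obtained in the Deligne--Lusztig setting). The construction of the twist $X_\sigma$ goes through because $X$, being separated of finite type, admits a cover by $\sigma$-stable affine opens (intersect the $\sigma$-orbit of any affine open), so the modified Galois descent datum is effective by gluing the affine case; your remark that $(\Frob\sigma)^d = \Frob^d$ (no extra $N$ is needed, since $\sigma$ commutes with $\Frob$ and $\sigma^d=\id$) already shows that $X_\sigma$ acquires an $\FF_{q^d}$-structure identical to that of $X$, which is what makes the fixed-point set finite. The identification of cohomology is immediate from the base-change isomorphism $\overline{X_\sigma}\cong\overline{X}$, under which the geometric Frobenius of $X_\sigma$ transports to $\Frob\sigma$ by construction. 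So your proposal is sound; the only place to be careful is exactly the one you flag, namely checking effectivity of descent for a not-necessarily-quasi-projective $X$, and the $\sigma$-stable affine cover handles that.
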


 Under the assumptions of the theorem, the number $|\overline{X}^{\Frob \sigma}|$ only depends on the conjugacy class of $\sigma$.

 Let $R(\Gamma)$ denote the representation ring of $\Gamma$ and
 let the compactly supported $\Gamma$-equivariant Euler characteristic of $X$ be 
 defined as the virtual representation
 \begin{equation*}
  \mathrm{Eul}^{\Gamma}_{\overline{X},c} = \sum_{k \geq 0} (-1)^k \cdot \Hetc{k}{\overline{X}} \in R(\Gamma).
 \end{equation*}
By evaluating $\mathrm{Eul}^{\Gamma}_{\overline{X},c}$ at an element $\sigma \in \Gamma$
we mean
 \begin{equation*}
 \mathrm{Eul}^{\Gamma}_{\overline{X},c}(\sigma) = 
  \sum_{k \geq 0} (-1)^k \cdot \mathrm{Tr}\left( \sigma, \Hetc{k}{\overline{X}} \right) \in \mathbb{Z}
 \end{equation*}
 Note in particular that $ \mathrm{Eul}^{\Gamma}_{\overline{X},c}(\mathrm{id})$ is
 the ordinary Euler characteristic of $\overline{X}$.
 By character theory, $\mathrm{Eul}^{\Gamma}_{\overline{X},c}$ is completely
 determined by computing $ \mathrm{Eul}^{\Gamma}_{\overline{X},c}(\sigma)$
 for a representative $\sigma$ of each conjugacy class of $\Gamma$. 
 
 Thus, phrased in these terms Lefschetz trace formula says that the $\Gamma$-equivariant Euler characteristic
 of $X$ is determined by counting fixed points of $\Frob \sigma$ for a representative $\sigma$
 of each conjugacy class of $\Gamma$.
 If we make stronger assumptions on $X$ we can say more about its cohomology from these equivariant
 point counts.
 
 \begin{definition}[Dimca and Lehrer \cite{dimcalehrer}]
 \label{poscharmp}
  Let $X$ be an irreducible and separated scheme of finite type over $\FFbar_{q}$. 
  The scheme $X$ is called
  \emph{minimally pure} if $\Frob$ acts on $\Hetc{k}{X}$ by multiplication by 
  $q^{k-\mathrm{dim}(X)}$.
  
  A pure dimensional and separated scheme $X$ of finite type over $\FFbar_{q}$ is minimally
  pure if for any collection $\{X_1, \ldots, X_r\}$ of irreducible components of 
  $X$, the irreducible scheme $X_1 \setminus (X_2 \cup \cdots \cup X_r)$ is
  minimally pure.
 \end{definition}
 
 We define the compactly supported $\Gamma$-equivariant Poincaré polynomial of $X$
 as
 \begin{equation*}
  P^{\Gamma}_{\overline{X},c}(t) = \sum_{k \geq 0} \Hetc{k}{\overline{X}} \cdot t^k \in R(\Gamma)[t].
 \end{equation*}
and we introduce the notation
 \begin{equation*}
 P^{\Gamma}_{\overline{X},c}(\sigma)(t) =
  \sum_{k \geq 0} \mathrm{Tr}\left( \sigma, \Hetc{k}{\overline{X}} \right) \cdot t^k \in \mathbb{Z}[t]
 \end{equation*}
 
 Thus, if $X$ is minimally pure 
 then the eigenvalues of $\Frob \sigma$ on $\Hetc{k}{\overline{X}}$ are of the form 
 $\zeta q^{k-\mathrm{dim}(X)}$ for some root of unity $\zeta$. Thus, a term $q^{k-\mathrm{dim}(X)}$ in $|\overline{X}^{\Frob \circ \sigma}|$
 can only come from $\Hetc{k}{\overline{X}}$ and we can determine the $\Gamma$-equivariant Poincaré
 polynomial of $X$ via the relation
 \begin{equation*}
  \mathrm{Eul}^{\Gamma}_{\overline{X},c}(\sigma) = q^{-2\mathrm{dim}(X)} \cdot P^{\Gamma}_{\overline{X},c}(\sigma)(-q^2).
 \end{equation*}
 In particular we see that if $X$ is minimally pure, then we can determine the cohomology groups
 of $X$ by counting points over finite fields. 
 
 Since $q$ is an integer we see that 
 the above formula indeed yields an integer. On the other hand we may also
 view the above expression as a polynomial in $q$. We then see that the point
 count is polynomial and that the coefficients of this polynomial is given by the
 values of the characters of the $\Gamma$-representations $\Hetc{k}{\overline{X}}$.

\section{The space of seven points}
\label{sevepointsec}
Let $T=(P_1, \ldots, P_7)$ be a septuple of points in $\PP^2$. 
We say that $T$ is in general linear position if no three of
the points lie on a line. We denote the space of septuples
in $\PP^2$ in general linear position, up to projective equivalence,
by $\Ccal_{2,7}$. 

\begin{prop}
\label{27minpure_prop}
 The space $\Ccal_{2,7}$ of seven points in the plane in general linear position is minimally pure.
\end{prop}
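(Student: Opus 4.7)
The plan is to realise $\Ccal_{2,7}$ as the complement of a toric arrangement and then invoke the known minimal purity of such complements. I would begin by using the $\mathrm{PGL}(3)$-freedom to normalise: since any four of the seven points form a projective frame, fixing $P_1,\ldots,P_4$ to be the standard frame $(1{:}0{:}0),(0{:}1{:}0),(0{:}0{:}1),(1{:}1{:}1)$ rigidifies the quotient and identifies $\Ccal_{2,7}$ with the open subscheme of $(\PP^2)^3$ parametrising triples $(P_5,P_6,P_7)$ whose union with the fixed frame is in general linear position.

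Next, I would exhibit this open subscheme as a toric arrangement complement inside a six-dimensional torus. In the affine chart $z=1$, writing $P_i=(x_i,y_i,1)$ for $i=5,6,7$, the six lines joining pairs of frame points cut out, for each $i$ individually, the five divisors $x_i=0$, $y_i=0$, $x_i=1$, $y_i=1$, $x_i=y_i$; these are all cosets of codimension-one subtori in $(\mathbb G_m)^2$ and their complement is the familiar realisation of $\mathcal M_{0,5}$ as a toric arrangement complement. The cross-point conditions --- collinearity of $\{P_5,P_6,P_j\}$ for a frame point $P_j$, and of $\{P_5,P_6,P_7\}$ --- are determinantal, but a direct computation turns many of them into binomial relations; for instance, collinearity of $P_5,P_6,P_4$ unfolds to $(x_5-1)(y_6-1)=(x_6-1)(y_5-1)$, which after the coordinate change $u_i=x_i-1$, $v_i=y_i-1$ becomes the vanishing of the character $u_5 v_5^{-1} u_6^{-1} v_6$ on $(\mathbb G_m)^4$, cutting out a genuine subtorus. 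The goal is then to show that, after a careful sequence of such multiplicative coordinate changes, every remaining incidence condition is expressible as the vanishing of a character on an appropriate subtorus, realising $\Ccal_{2,7}$ as a toric arrangement complement in an ambient $(\mathbb G_m)^6$.

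Once this realisation is in place, minimal purity follows from the general theory of complements of toric arrangements: such complements admit smooth compactifications (wonderful models \emph{à la} De Concini--Procesi) whose boundary is a simple normal crossings divisor with pure, Tate-type cohomology on every stratum (the strata being themselves products of tori and quotients thereof), and an iterated application of the Gysin sequence forces each $\Hetc{k}{\overline{\Ccal_{2,7}}}$ to be of Hodge--Tate type with Frobenius acting by $q^{k-\dim \Ccal_{2,7}}$, which is exactly minimal purity in the sense of Definition~\ref{poscharmp}.

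The main obstacle I anticipate is the middle step: it is not \emph{a priori} clear that the collinearity condition among three \emph{free} points $P_5,P_6,P_7$, which is a genuine $3\times 3$ determinant rather than a binomial, can be brought into binomial form by any single coordinate change simultaneously compatible with all the other incidence conditions. A fall-back strategy, should a single toric realisation prove elusive, is to stratify via the forgetful fibration $\Ccal_{2,7}\to\Ccal_{2,6}$ --- whose target is already minimally pure by \cite{dasoconnor,bergvallgounelas} --- and to show instead that each fibre, the complement in $\PP^2$ of the fifteen lines through pairs of a fixed configuration of six points in general linear position, is itself a toric arrangement complement. Combining minimal purity of base and fibre via degeneration of the Leray spectral sequence would then deliver the conclusion, with checking that degeneration (and compatibility with the $S_7$-action needed for the equivariant refinement used in Theorem~\ref{27thm}) as the remaining technical point to resolve.
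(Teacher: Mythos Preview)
Your primary approach hits exactly the wall you anticipate, and it is a genuine obstruction rather than a technicality: the divisor where the three free points $P_5,P_6,P_7$ are collinear is cut out by a six-term determinant that no single multiplicative coordinate change will reduce to a binomial while simultaneously keeping all the frame-point conditions binomial (your own calculation with $(1{:}1{:}1)$ already illustrates how the four frame points pull the coordinates in incompatible directions). So $\Ccal_{2,7}$ is not itself the complement of a toric arrangement, and the direct route is closed. Your fall-back via $\Ccal_{2,7}\to\Ccal_{2,6}$ is more promising and could probably be pushed through --- though note the fibres are complements of \emph{line} (hyperplane) arrangements, not toric ones --- but as written it is only a sketch: you would still need to verify that the sheaves $R^qf_!\Qell$ are constant and that Leray degenerates for weight reasons, neither of which you have argued.

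The paper bypasses both difficulties by passing to a finite cover rather than realising $\Ccal_{2,7}$ itself as an arrangement complement. Blowing up the seven points yields a weak Del Pezzo surface $S$ of degree~$2$, and the cover $X$ parametrises triples $(S,B,C)$ where $B$ is a geometric marking and $C$ is an anticanonical curve whose image in $\PP^2$ is a reducible cubic (line $+$ conic) with exactly two of the $P_i$ on the line component. The forgetful map $X\to\Ccal_{2,7}$ is finite of degree~$21$, $\FF_q$-rational and $S_7$-equivariant, so $\Hetc{k}{\Ccal_{2,7}}$ embeds Frobenius-equivariantly in $\Hetc{k}{X}$ and it suffices to show $X$ is minimally pure. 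Splitting $X=X^{\mathrm{gen}}\sqcup X^{\mathrm{tan}}$ according to whether the line meets the conic transversally or tangentially, Looijenga's theory \cite{looijenga} identifies $X^{\mathrm{gen}}$ with a disjoint union of toric arrangement complements and $X^{\mathrm{tan}}$ with projectivised hyperplane arrangement complements sitting in their tangent spaces; his Lemma~3.6 then gives an inclusion $\Hetc{k}{X}\hookrightarrow\Hetc{k}{X^{\mathrm{gen}}}$, whence $X$ is minimally pure. The conceptual point your proposal is missing is that the torus does not come from coordinates on $\PP^2$ at all: it comes from the generalised Jacobian of the anticanonical cubic, which for a line-plus-conic meeting transversally is $\mathbb{G}_m$.
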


\begin{proof}
 The idea of the proof is as follows. We shall construct a $\FF_q$-rational and $S_7$-equivariant finite cover
 $\varphi: X\to \Ccal_{2,7}$ such that $X$ is minimally pure.
 A finite cover $\varphi: X \to \Ccal_{2,7}$ induces an
 $\FF_q$-rational inclusion $\Hetc{k}{\Ccal_{2,7}} \hookrightarrow \Hetc{k}{X}$.
 Thus, the image of $\Hetc{k}{\Ccal_{2,7}}$ is preserved by the Frobenius.
 Since $X$ is minimally pure, $\Frob$ acts on $\Hetc{k}{X}$ with all eigenvalues equal to
 $q^{k-\mathrm{dim}(X)}$. We then see that $\Frob$ also acts with all eigenvalues
 equal to $q^{k-\mathrm{dim}(X)}$ on the subspace $\Hetc{k}{\Ccal_{2,7}}$, i.e. 
 that $\Ccal_{2,7}$ is minimally pure.
 
 To construct the space $X$ we observe that if we blow up $\PP^2$ in seven
 points $P_1, \ldots, P_7$ in general linear position we obtain a weak Del Pezzo
 surface $S$ of degree $2$ marked with seven $(-1)$-curves $E_1, \ldots, E_7$. 
 Together with $L$, the strict transform of a line, these curves constitute a
 basis $B$ for Picard group of $S$. Such a basis for $\Pic(S)$ (i.e. one arising in this way
 from a blow up) is called a geometric
 marking. Let $\pi:S \to \PP^2$ denote the blow down morphism.
 We rigidify the situation further by marking $S$ with an anticanonical curve $C$,
 i.e. a curve whose class is $3L-E_1-\cdots-E_7$. 
 The curve $A=\pi(C)$ has degree $3$. Thus, if $A$ is reducible it will
 consist of two components $A_1$ and $A_2$ of degrees $1$ and $2$, respectively. 
 
 Let $X$ be the space of triples $(S,B,C)$ such that $S$ is a weak Del Pezzo surface of degree $2$,
 $B$ is a geometric marking of $S$ and $C$ is an anticanonical curve on $S$ such
 that $A=\pi(C)$ is reducible and the degree $1$ component $A_1$ of $A$ contains $2$
 of the points $P_1, \ldots, P_7$.
 The morphism $\varphi:X \to \Ccal_{2,7}$ sending a triple
 $(S,B,C)$ to the septuple $(P_1,\ldots, P_7)$ defined by the blow down of $B$
 is clearly $\FF_q$-rational, finite (of degree $21$) and $S_7$-equivariant. It thus remains to
 show that $X$ is minimally pure.
 
 Note that the curve $A_1$ either will intersect the degree $2$ component $A_2$ transversally in two points
 or $A_1$ will be tangent to $A_2$ at some point. The points of intersection may or may not coincide with some
 of the points $P_1, \ldots, P_7$, see Figure~\ref{Acurve}. Let
 $X^{\textrm{gen}}$ denote the subspace of $X$ consisting of triples
 $(S,B,C)$ such that $A_1$ and $A_2$ intersect transversally and let
 $X^{\textrm{tan}}$ be the subspace of $X$ consisting of triples such that
 $A_1$ is a tangent to $A_2$. We then have that $X$ is the disjoint union
 $$
  X = X^{\textrm{gen}} \sqcup X^{\textrm{tan}}.
 $$
 
 The next step is to show that $X^{\textrm{gen}}$ is isomorphic to a disjoint union complements of
 toric arrangements in algebraic tori $T_i$ and that $X^{\textrm{tan}}$ is isomorphic to a disjoint union of
  projectivizations of complements of hyperplane arrangements in vector spaces $V_i$
 in such at way that $V_i$ can be naturally be identified with the tangent space of $T_i$ at the identity.
 This follows exactly as in Section 1 of \cite{looijenga} 
 (see in particular Proposition 1.13, Proposition 1.15 and Proposition 1.17) with the
 very small modification that we need to take away the conditions corresponding to disallowing more
 than $5$ of the points $P_1, \ldots, P_7$ to lie on a conic.
 See also \cite{bergvallgounelas}, Theorem 5.1, for a slightly different account of the same ideas.
 
 It now directly follows from Lemma 3.6 of \cite{looijenga} that there is an inclusion
 $$
 \Hetc{k}{X} \hookrightarrow  \Hetc{k}{X^{\textrm{gen}}}
 $$
 Complements of toric arrangements are well known to be minimally pure (see e.g. \cite{dimcalehrer})
 so it follows that also $X$ is minimally pure. This completes the proof.
\end{proof}

\begin{figure}[h!]
  \centering
  \begin{subfigure}[b]{0.3\linewidth}
    \includegraphics[width=\linewidth]{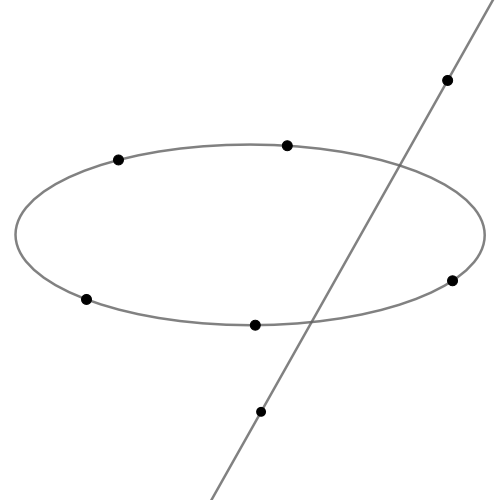}
    \caption{}
  \end{subfigure}
  \begin{subfigure}[b]{0.3\linewidth}
    \includegraphics[width=\linewidth]{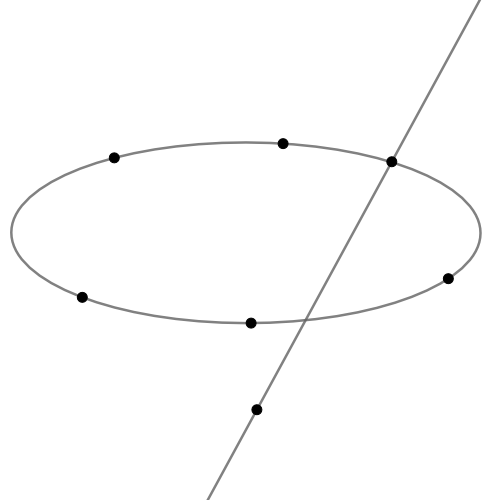}
    \caption{}
  \end{subfigure}
  \begin{subfigure}[b]{0.3\linewidth}
    \includegraphics[width=\linewidth]{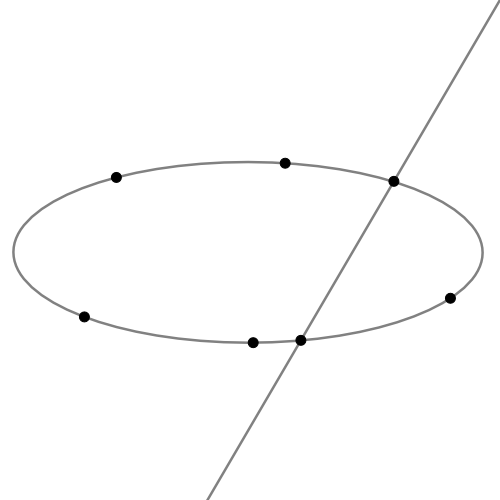}
    \caption{}
  \end{subfigure}
  \begin{subfigure}[b]{0.3\linewidth}
    \includegraphics[width=\linewidth]{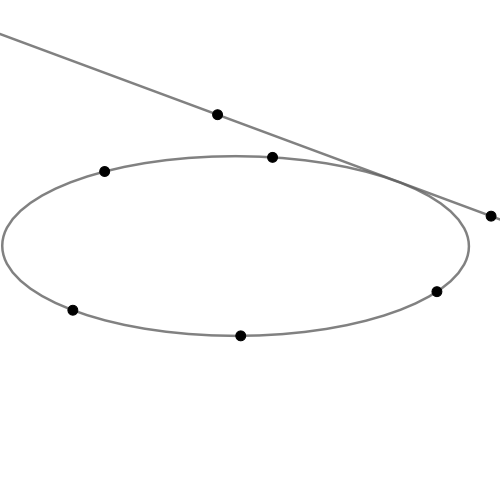}
    \caption{}
  \end{subfigure}
  \begin{subfigure}[b]{0.3\linewidth}
    \includegraphics[width=\linewidth]{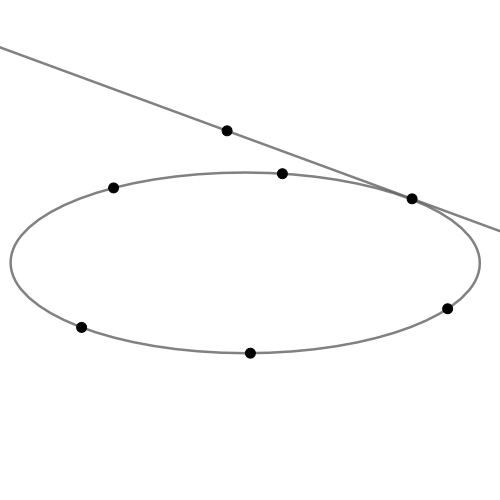}
    \caption{}
  \end{subfigure}
  \caption{The curve $A=A_1 \cup A_2$.}
  \label{Acurve}
\end{figure}

By Proposition~\ref{27minpure_prop}, we can determine the cohomology groups of
$\Ccal_{2,7}$ as representations of $S_7$ by counting fixed points of $\Frob \sigma$ 
for each $\sigma\in S_7$ and then applying Lefschetz trace formula.
Since the number $|\Ccal_{2,7}^{\Frob \sigma}|$ only depends on the
conjugacy class of $\sigma$ we only have to do one computation for each of the $15$
conjugacy classes of $S_7$. We denote the conjugacy class of $\sigma$ by $[\sigma]$.
Recall also that the conjugacy class $[\sigma]$ is determined by the cycle type
of $\sigma$. We denote cycle types by $7^{n_7}6^{n_6} \cdots 2^{n_2}1^{n_1}$
(with the convention to omit $m^{n_m}$ if $n_m=0$ and to write $m$ instead of
$m^{n_m}$ if $n_m=1$).

The necessary computations can be deduced quite straightforwardly from the corresponding
computations in \cite{bergvall_pts}. We therefore omit the details in most cases and
simply record the results in Table~\ref{counttab}. However, to illustrate the nature of the
computations, at least in one of the simpler cases, we include the following lemma.

\begin{lem}
 If $[\sigma]=[7]$, then $|\Ccal_{2,7}^{\Frob \sigma}| = q^6 + q^4 + q^3 + q^2 + 1$.
\end{lem}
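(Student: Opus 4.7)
The plan is to combine Galois descent with a direct inclusion-exclusion. Lang--Steinberg applied to the connected group $\mathrm{PGL}_3$ implies that every $\Frob\sigma$-fixed orbit in $\Ccal_{2,7}(\FFbar_q)$ has a representative $(P_1,\ldots,P_7) \in C_{2,7}(\FFbar_q)$ satisfying $\Frob(P_i) = P_{\sigma^{-1}(i)}$, with two such representatives differing by an element of $\mathrm{PGL}_3(\FF_q)$. Any four points in general linear position have trivial $\mathrm{PGL}_3$-stabilizer, so this action is free, giving
\[
|\Ccal_{2,7}^{\Frob\sigma}| = \frac{N}{|\mathrm{PGL}_3(\FF_q)|}, \qquad |\mathrm{PGL}_3(\FF_q)| = q^3(q^3-1)(q^2-1),
\]
where $N$ counts the tuples above.

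For $\sigma$ a $7$-cycle, iterating the relation $\Frob(P_i) = P_{\sigma^{-1}(i)}$ shows that $P_1 \in \PP^2(\FF_{q^7})$ and determines the remaining $P_j$ as its Galois conjugates. The general linear position assumption forces these to be distinct, so $P_1$ must have degree exactly $7$ over $\FF_q$; conversely, any such $P$ gives a valid tuple. Hence $N$ equals the number of $P \in \PP^2(\FF_{q^7})$ of degree $7$ whose seven Galois conjugates lie in general linear position.

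To compute $N$, I would run an inclusion-exclusion on the $\binom{7}{3}=35$ collinearity conditions among the Galois conjugates of $P$. The cyclic group $\langle \Frob \rangle \cong \ZZ/7$ acts on these triples with $5$ orbits of size $7$, indexed by the cyclic gap partitions $(1,1,5)$, $(1,2,4)$, $(1,4,2)$, $(1,3,3)$, $(2,2,3)$. Since Frobenius preserves collinearity, imposing collinearity on one triple in an orbit is equivalent to imposing it on all seven in that orbit, so the bad locus in $\PP^2_{\FF_{q^7}}$ is a union of only five determinantal subvarieties $B_1,\ldots,B_5$, each cut out by a $3 \times 3$ determinant in the coordinates of $P$ and their Frobenius powers.

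The main obstacle will be controlling the intersections $\bigcap_{i \in I} B_i$ for the $31$ non-empty subsets $I \subset \{1,\ldots,5\}$. Some cases are immediate: the $(1,1,5)$-condition $\{P,\Frob P,\Frob^2 P\}$ collinear forces the spanning line to be $\Frob$-stable and hence $\FF_q$-rational, placing all Galois conjugates on one of the $q^2+q+1$ lines of $\PP^2(\FF_q)$. Similar Galois-rigidity arguments should reduce the more elaborate simultaneous collinearities to configurations supported on $\FF_q$-rational curves (lines, conics, or reducible cubics), making each intersection count explicit. Assembling these counts into the inclusion-exclusion and dividing by $|\mathrm{PGL}_3(\FF_q)|$ should then yield the claimed polynomial $q^6+q^4+q^3+q^2+1$.
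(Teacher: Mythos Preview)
Your reduction via Lang--Steinberg to counting degree-$7$ points $P \in \PP^2(\FF_{q^7})$ whose Galois orbit avoids all collinearities is exactly the paper's. Where you diverge is in handling the bad locus: you set up a $31$-term inclusion--exclusion over the five Frobenius orbits $B_1,\ldots,B_5$ of collinear triples and anticipate that the deeper intersections will involve $\FF_q$-rational conics or reducible cubics. The paper instead uses a single lemma (cited there as Lemma~4 of \cite{bergvall_pts}): if \emph{any} three of $P,\Frob P,\ldots,\Frob^6 P$ are collinear, then \emph{all seven} lie on one $\FF_q$-rational line. Hence $B_1=\cdots=B_5$, the inclusion--exclusion collapses entirely, and the bad count is simply $(q^2+q+1)(q^7-q)$.

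You already verify this for the $(1,1,5)$ orbit. For the others the argument runs as follows: if the line $L$ through a given collinear triple of conjugates is not $\Frob$-stable then, since $7$ is prime, the lines $L,\Frob L,\ldots,\Frob^6 L$ are seven distinct lines, each meeting the seven-point Galois orbit in at least three points; the resulting $7\cdot\binom{3}{2}=21=\binom{7}{2}$ collinear pairs exhaust all pairs and force a Fano configuration, which cannot be realized in $\PP^2$ in odd characteristic. So $L$ is $\FF_q$-rational after all and carries the whole orbit. Your expectation of conics and cubics appearing in the intersections is therefore off the mark, and the inclusion--exclusion you outline, while not wrong in principle, is an uncompleted detour around a one-line count.
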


\begin{proof}
 Since the result is independent of the particular representative of the conjugacy class,
 we pick $\sigma$ so that $\sigma^{-1}=(1234567)$.
 Let $X=(\PP^2)^7$, let $\Delta$ denote the subspace of $X$ consisting of septuples 
 $(P_1,\ldots, P_7)$ such that at least $3$ of the points lie on a line and let
 $U=X \setminus \Delta$. Then $\Ccal_{2,7}^{\Frob \sigma} =U^{\Frob \sigma}/\mathrm{PGL}(3,\FF_q)$.
 Since $|U^{\Frob \sigma}|=|X^{\Frob \sigma}|-|\Delta^{\Frob \sigma}|$ we can obtain the result
 by counting $X^{\Frob \sigma}$ and $\Delta^{\Frob \sigma}$.
 
 A septuple $(P_1, \ldots, P_7)$ is fixed by $\Frob \sigma$ if and only if
 $\Frob$ acts as $\sigma^{-1}=(1234567)$, i.e. 
 \begin{equation*} 
 \Frob P_1 =P_2, \quad \Frob P_2 = P_3, \quad \ldots \quad  \Frob P_6=P_7, \quad  \Frob P_7 =P_1
 \end{equation*}
 We may thus obtain such a septuple by choosing a point $P_1 \in \PP^2$ which is defined
 over $\FF_{q^7}$ but not over $\FF_{q}$ and then acting by $\Frob$. Thus to count
 $|X^{\Frob \sigma}|$ we need to count $|\PP^2(\FF_{q^7})|$ and $|\PP^2(\FF_{q})|$
 and take the difference:
 \begin{equation*}
  |X^{\Frob \sigma}| = |\PP^2(\FF_{q^7})| - |\PP^2(\FF_q)| = (q^{14}+q^7+1)-(q^2+q+1)=q^{14}+q^7-q^2-q
 \end{equation*}
 
 In order to determine $|\Delta^{\Frob \sigma}|$ we observe that if $(P_1, \ldots, P_7)$ is a septuple
 in $X^{\Frob \sigma}$ such that three of the points lie on a line $L$, then $L$ must be defined over
 $\FF_q$ and $L$ must contain all seven points $P_1, \ldots, P_7$ (see Lemma 4 of \cite{bergvall_pts} for some
 elaboration on this fact). Thus, to determine $|\Delta^{\Frob \sigma}|$ we should count the number of
 lines $L \subset \PP^2$ defined over $\FF_q$ and the number of ways to pick a point
 $P_1$ on $L$ such that $P_1$ is defined over $\FF_{q^7}$ but not over $\FF_q$. 
 The number of $\FF_q$-lines $L$ in $\PP^2$ is $q^2+q+1$ and the number of ways to pick $P_1$
 on $L$ is $(q^7+1)-(q+1)=q^7-q$.
 We thus obtain the result
 \begin{equation*}
  |\Delta^{\Frob \sigma}| = (q^2+q+1)(q^7-q)=q^9 + q^8 + q^7 - q^3 - q^2 - q
 \end{equation*}
 We get
 \begin{equation*}
  |X^{\Frob \sigma}| -|\Delta^{\Frob \sigma}| = q^{14} - q^9 - q^8 + q^3
 \end{equation*}
 and, finally
 \begin{equation*}
  |\Ccal_{2,7}^{\Frob \sigma}| = 
  \frac{|X^{\Frob \sigma}| -|\Delta^{\Frob \sigma}|}{|\mathrm{PGL}(3,\FF_q)|} =
  \frac{q^{14} - q^9 - q^8 + q^3}{(q^2+q+1)(q^3-q)(q^3-q^2)} =
  q^6 + q^4 + q^3 + q^2 + 1
 \end{equation*}
\end{proof}

\begin{center}
\begin{table}[h]
\begin{tabular}{ l | l  }
$[\sigma]$ & $|\Ccal_{2,7}^{\Frob \sigma}|$ \\
\hline
$7$ & $q^6 + q^4 + q^3 + q^2 + 1$ \\
$61$ & $q^6 + q^5 + q^4 - q^2$ \\
$52$ & $q^6 + q^4 - q^3 - q$ \\
$51^2$ & $q^6 + 2q^5 + 3q^4 + 3q^3 + 2q^2 + q$ \\
$43$ & $q^6 - q^5 - q^4 + q^3$ \\
$421$ & $q^6 - 3q^4 + 2$ \\
$41^3$ & $q^6 + 2q^5 - q^4 - 2q^3$ \\
$3^21$ & $q^6 - q^5 - q^4 - 8q^3 + 9q^2 + 6q + 18$ \\
$32^2$ & $q^6 - q^5 - q^4 + q^3$ \\
$321^2$ & $q^6 - q^5 - q^4 + q^3$ \\
$31^4$ & $q^6 - q^5 - q^4 + q^3$ \\
$2^31$ & $q^6 - 2q^5 - 11q^4 + 18q^3 + 38q^2 - 36q - 48$ \\
$2^21^3$ & $q^6 - 4q^5 - q^4 + 16q^3 - 6q^2 - 12q + 6$ \\
$21^5$ & $q^6 - 10q^5 + 41q^4 - 86q^3 + 90q^2 - 36q$ \\
$1^7$ & $q^6 - 28q^5 + 323q^4 - 1952q^3 + 6462q^2 - 11004q + 7470$
\end{tabular}
\caption{$S_7$-equivariant point counts of the space of $7$ points in general linear position in $\PP^2$.}
\label{counttab}
\end{table}
\end{center}

If we view $\Hetc{k}{\Ccal_{2,7}}$ as a representation of $S_7$, then we may
read off the values of the corresponding character $\chi_{k}$ from Table~\ref{counttab} as
\begin{equation*}
 \chi_k(\sigma) = (-1)^k|\Ccal_{2,7}^{\Frob \sigma}|_{q^{6-k}}
\end{equation*}
where $|\Ccal_{2,7}^{\Frob \sigma}|_{q^{6-k}}$ denotes the coefficient of
$q^{6-k}$ in the polynomial $|\Ccal_{2,7}^{\Frob \sigma}|_{q^{6-k}}$.

It is a standard fact from representation theory of finite groups that the
irreducible representations of $S_m$ are indexed by partitions of $m$.
In Table~\ref{cohtab} we decompose the cohomology groups
$\Hetc{k}{\Ccal_{2,7}}$ into irreducible representations of $S_7$.
A number $r$ in the column indexed by $7^{n_7}6^{n_6} \cdots 2^{n_2}1^{n_1}$ means
that the irreducible representation corresponding to $7^{n_7}6^{n_6} \cdots 2^{n_2}1^{n_1}$ occurs
with multiplicity $r$ in $\Hetc{k}{\Ccal_{2,7}}$. 

\begin{center}
\begin{table}[h]
\begin{tabular}{r|rrrrrrrrrrrrrrr}
\, & {\tiny $7$} & {\tiny $6$} & {\tiny $52$} & {\tiny $51^2$} & {\tiny $43$} & {\tiny $421$} & {\tiny $41^3$} & {\tiny $3^21$} & {\tiny $32^2$} & {\tiny $321^2$} & {\tiny $31^4$} & {\tiny $2^31$} & {\tiny $2^21^3$} & {\tiny $21^5$} & {\tiny $1^7$} \\
\hline
$H^0$ & 1 & 0 & 0 & 0 & 0 & 0 & 0 & 0 & 0 & 0 & 0 & 0 & 0 & 0 & 0 \\
$H^1$ & 0 & 0 & 1 & 0 & 1 & 0 & 0 & 0 & 0 & 0 & 0 & 0 & 0 & 0 & 0 \\
$H^2$ & 0 & 1 & 1 & 3 & 1 & 3 & 1 & 3 & 1 & 1 & 0 & 0 & 0 & 0 & 0 \\
$H^3$ & 0 & 3 & 6 & 9 & 7 & 15 & 10 & 9 & 6 & 12 & 3 & 5 & 3 & 0 & 0 \\
$H^4$ & 3 & 9 & 21 & 19 & 20 & 47 & 27 & 25 & 29 & 42 & 20 & 17 & 13 & 6 & 1 \\
$H^5$ & 3 & 14 & 34 & 31 & 31 & 78 & 42 & 44 & 48 & 75 & 34 & 30 & 29 & 13 & 1 \\
$H^6$ & 2 & 9 & 18 & 25 & 23 & 50 & 31 & 34 & 28 & 52 & 19 & 23 & 22 & 9 & 4
\end{tabular}
\caption{The cohomology groups of $\Ccal_{2,7}$ as representations of $S_7$.}
\label{cohtab}
\end{table}
\end{center}

\section{Eight points}

In this section we will show, using techniques similar to those of the proof of Proposition~\ref{27minpure_prop},
that the space of eight points in general position is minimally pure. 
We will also explain why similar
techniques will not suffice to prove minimal purity for the space of eight points in general linear position
(of course, this does not prove that the space of eight points in general linear position is not minimally pure).

Recall that $8$ points $P_1, \ldots, P_8 \in \PP^2$ are in general position if
\begin{itemize}
 \item[(\emph{i})] no three of the points lie on a line,
 \item[(\emph{ii})] no six of the points lie on a conic, and
 \item[(\emph{iii})] the eight points do not lie on a singular cubic with a singularity at one of the points $P_1, \ldots, P_8$.
\end{itemize}
We denote the space of octuples $(P_1, \ldots, P_8)$ of points in $\PP^2$ in general position, up
to projective equivalence, by $\Ccal_{2,8}^{\mathrm{gp}}$.

\begin{prop}
\label{28minpure_prop}
 The space $\Ccal_{2,8}^{\mathrm{gp}}$ of octuples of points in the plane in general position
 is minimally pure.
\end{prop}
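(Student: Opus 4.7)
The plan is to mimic the proof of Proposition~\ref{27minpure_prop}. Conditions (i)--(iii) in the definition of general position are precisely what is needed to ensure that the blowup $S$ of $\PP^2$ at eight points $P_1, \ldots, P_8$ is a weak Del Pezzo surface of degree $1$. I would form an $\FF_q$-rational, $S_8$-equivariant finite cover $\varphi : X \to \Ccal_{2,8}^{\mathrm{gp}}$, where $X$ parametrizes triples $(S, B, C)$ with $B$ a geometric marking of $\mathrm{Pic}(S)$ and $C$ a \emph{singular} member of the anticanonical pencil on $S$. Pushing down, the cubic $A = \pi(C)$ is forced to be irreducible: a line-plus-conic decomposition would require a line through $3$ of the $P_i$ (ruled out by (i)) or a conic through $6$ of them (ruled out by (ii)), while three lines can cover only $6$ of the $8$ points. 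Hence $A$ is either nodal or cuspidal, with its singularity disjoint from $\{P_1, \ldots, P_8\}$ by (iii). The standard Euler-characteristic computation for the rational elliptic surface built from the anticanonical pencil gives twelve singular fibres, so $\varphi$ has generic degree $12$.

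Next I would stratify
\begin{equation*}
X = X^{\mathrm{nod}} \sqcup X^{\mathrm{cusp}}
\end{equation*}
according to the singularity type of $A$ and identify the two strata with complements of toric arrangements in algebraic tori $T_i$ (resp.\ projectivized complements of hyperplane arrangements in vector spaces $V_i$ canonically identified with the tangent spaces to $T_i$ at the identity). Concretely, the smooth locus of $A$ is isomorphic to $\mathbb{G}_m$ in the nodal case and to $\mathbb{G}_a$ in the cuspidal case, and the classical group law on plane cubics converts the collinearity and conic conditions in (i) and (ii) into the requirement that certain products (resp.\ sums) of the group coordinates of the eight $P_i$ be nontrivial. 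This is exactly the argument of Propositions~1.13, 1.15 and 1.17 of \cite{looijenga}.

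Lemma~3.6 of \cite{looijenga} then produces an injection $\Hetc{k}{X} \hookrightarrow \Hetc{k}{X^{\mathrm{nod}}}$, and since complements of toric arrangements are minimally pure it follows that $X$ is minimally pure. The finite cover argument of Proposition~\ref{27minpure_prop} transfers minimal purity from $X$ to $\Ccal_{2,8}^{\mathrm{gp}}$. The main obstacle I anticipate is the combinatorial bookkeeping: correctly enumerating the $S_8$-orbits of components of $X^{\mathrm{nod}}$ and $X^{\mathrm{cusp}}$ and verifying that the toric and hyperplane arrangements produced by (i)--(iii) coincide with those in Looijenga's analysis. It is also worth observing that the same strategy must fail for the larger space $\Ccal_{2,8}$ of eight points in only general linear position: condition (iii) is precisely what guarantees that $-K_S$ is nef, and without it the anticanonical pencil may acquire fixed components and its singular members need not push down to irreducible plane cubics, so the arrangement description breaks down.
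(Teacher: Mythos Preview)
Your proposal is correct and follows essentially the same argument as the paper. Two minor corrections: in the degree~$1$ case the relevant Looijenga references are Propositions~1.8 and~1.11 (not~1.13 and~1.15, which treat the reducible line-plus-conic cubics of the degree~$2$ case), and here $X^{\mathrm{nod}}$ and $X^{\mathrm{cusp}}$ are each connected---there is a single torus $T$ and a single vector space $V$, so the $S_8$-orbit bookkeeping you anticipate as an obstacle does not arise.
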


\begin{proof}
 The idea of the proof is the same as in the proof of Proposition~\ref{27minpure_prop}:
 we shall construct a $\FF_q$-rational and $S_8$-equivariant finite cover
 $\varphi: X\to \Ccal_{2,8}^{\mathrm{gp}}$ such that $X$ is minimally pure
 and the result will then follow (in fact, in this case the cover will even be
 equivariant with respect to the much larger group $W(E_8)$, the Weyl group of the root system $E_8$).

 To construct the space $X$ we observe that if we blow up $\PP^2$ in eight
 points $P_1, \ldots, P_8$ in general position we obtain a geometrically marked Del Pezzo
 surface $S$ of degree $1$. Let $\pi:S \to \PP^2$ denote the blow down morphism.
 We rigidify the situation further by marking $S$ with an anticanonical curve $C$,
 i.e. a curve whose class is $3L-E_1-\cdots-E_8$, where $L$ denotes the strict transform of a line and
 $E_i$ denotes the exceptional class corresponding to the point $P_i$. 
 The curve $A=\pi(C)$ has degree $3$ and passes through the points 
 $P_1, \ldots, P_8$. Suppose that $A$ is reducible. Then $A$ contains a component $A_1$ of
 degree $1$. Since $P_1, \ldots, P_8$ are in general position, the component $A_1$ can contain 
 at most $2$ of the points. Thus, the residual curve $A_2$ must contain $6$ of the points.
 But $A_2$ has degree $2$ and is therefore allowed to contain at most $5$ of the points.
 We conclude that $A$ must be irreducible. 
 
 Let $X$ be the space of triples $(S,B,C)$ such that $S$ is a weak Del Pezzo surface,
 $B$ is a geometric marking of $S$ and $C$ is an anticanonical curve on $S$ such
 that $A=\pi(C)$ is singular.
 The morphism $\varphi:X \to \Ccal_{2,8}^{\mathrm{gp}}$ sending a triple
 $(S,B,C)$ to the octuple $(P_1,\ldots, P_8)$ defined by the blow down of $B$
 is clearly $\FF_q$-rational, finite (of degree $12$) and $S_8$-equivariant. It thus remains to
 show that $X$ is minimally pure.
 
 The curve $A$ will either be a nodal cubic or a cuspidal cubic.
 Let
 $X^{\textrm{node}}$ denote the subspace of $X$ consisting of triples
 $(S,B,C)$ such that $A$ is nodal and let
 $X^{\textrm{cusp}}$ be the subspace of $X$ consisting of triples such that
 $A$ is cuspidal. We then have that $X$ is the disjoint union
 $$
  X = X^{\textrm{node}} \sqcup X^{\textrm{cusp}}.
 $$
 
 The next step is to show that $X^{\textrm{node}}$ is isomorphic to a complement of
 a toric arrangement in an algebraic torus $T$ and that $X^{\textrm{cusp}}$ is isomorphic to a 
  projectivization of a complement of a hyperplane arrangement in a vector space $V$
 in such at way that $V$ can be naturally be identified with the tangent space of $T$ at the identity.
 This follows exactly as in Proposition 1.8, Proposition 1.11 and Proposition 1.17 of \cite{looijenga}.
 It now directly follows from Lemma 3.6 of \cite{looijenga} that there is an inclusion
 $$
 \Hetc{k}{X} \hookrightarrow  \Hetc{k}{X^{\textrm{node}}}
 $$
 Since complements of toric arrangements are  minimally pure it follows that also $X$ is minimally pure. This completes the proof.
\end{proof}

\begin{rem}
 The cohomology groups of the space $X^{\mathrm{cusp}}$ (at least before the projectivization)
 were computed as representations of the Weyl group $W(E_8)$ by Fleischmann and Janiszczak in
 \cite{fleischmannjaniszczak}. The cohomology groups of $X^{\mathrm{node}}$ were computed as
 representations of $W(E_8)$ by the author in \cite{bergvalltor}. Combining these two results using
 Lemma 3.6 of \cite{looijenga} gives the cohomology groups of $X$ as representations of $W(E_8)$.
\end{rem}

Since the space of eight points in general position is 
isomorphic to the moduli space of geometrically marked Del Pezzo surfaces of degree $1$
we can reformulate the above result as follows.

\begin{cor}
 The moduli space of geometrically marked Del Pezzo surfaces of degree $1$ is minimally pure.
\end{cor}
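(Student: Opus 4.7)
The plan is to deduce this corollary directly from Proposition~\ref{28minpure_prop} by exhibiting an isomorphism between $\Ccal_{2,8}^{\mathrm{gp}}$ and the moduli space of geometrically marked Del Pezzo surfaces of degree~$1$. First I would describe the map in both directions: given an octuple $(P_1,\ldots,P_8) \in \Ccal_{2,8}^{\mathrm{gp}}$, the blow-up $\pi : S \to \PP^2$ yields a surface equipped with the geometric marking $B=(L,E_1,\ldots,E_8)$, where $E_i$ is the exceptional divisor over $P_i$ and $L$ is the pullback of the class of a line. Conversely, given a geometrically marked surface $(S,B)$, the marking designates an ordered collection of eight disjoint $(-1)$-curves that can be contracted simultaneously, producing an ordered octuple of points in $\PP^2$.

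Next I would verify that these constructions descend to mutually inverse bijections on moduli. Projectively equivalent octuples clearly give isomorphic marked blow-ups. Conversely, any isomorphism of marked surfaces intertwines the two contractions (since the marking uniquely determines the blow-down morphism up to post-composition with an element of $\mathrm{PGL}(3)$) and therefore descends to an element of $\mathrm{PGL}(3)$ sending one octuple to the other. This identification is manifestly $\FF_q$-rational and $S_8$-equivariant, so once it is established, minimal purity of $\Ccal_{2,8}^{\mathrm{gp}}$ furnished by Proposition~\ref{28minpure_prop} transfers immediately to the moduli space of geometrically marked degree-$1$ Del Pezzo surfaces.

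The main obstacle is confirming that the three general position conditions (\emph{i})--(\emph{iii}) in the definition of $\Ccal_{2,8}^{\mathrm{gp}}$ correspond \emph{exactly} to the condition that the blow-up is a genuine (rather than merely weak) Del Pezzo surface of degree~$1$, i.e. that $-K_S$ is ample. This is a classical computation: a $(-2)$-curve on $S$ must be the strict transform of one of (a) a line through three of the points, (b) a conic through six of the points, or (c) a cubic singular at some $P_i$ and passing through all eight; conversely, ruling these out ensures ampleness of $-K_S$ by the standard nef-and-big criterion. Thus the three listed conditions match precisely the absence of $(-2)$-curves, and the identification of moduli spaces holds as claimed.
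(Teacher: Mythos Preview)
Your proposal is correct and follows exactly the same route as the paper: the corollary is deduced from Proposition~\ref{28minpure_prop} via the classical identification of $\Ccal_{2,8}^{\mathrm{gp}}$ with the moduli space of geometrically marked Del Pezzo surfaces of degree~$1$. The paper simply asserts this isomorphism as known, whereas you spell out the blow-up/blow-down correspondence and the matching of the general position conditions with the absence of $(-2)$-curves; your more detailed treatment is entirely consistent with the paper's one-line justification.
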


Thus, it is possible to determine the cohomology groups of $\Ccal_{2,8}^{\mathrm{gp}}$
by counting points over finite fields. This is however rather nontrivial and the topic
of ongoing research.

One central aspect of the proofs of Propositions~\ref{27minpure_prop} and \ref{28minpure_prop}
has been the ability to find rational cubics through the points $P_1, \ldots, P_m$
in a consistent way. More precisely, we have in the general case been able to
find rational cubics $A$ of fixed singularity type such that $\mathrm{Jac}(A) \cong \mathbb{G}_m$
and in the non-general case we have also had rational cubics $A$ of a single fixed singularity type
but such that $\mathrm{Jac}(A) \cong \mathbb{G}_a$. 

If we now consider $8$ points in general linear position we can no longer achieve this type of consistency.
For a general octuple $(P_1,\ldots, P_8)$ we will only have irreducible cubic curves passing through
the points, i.e. there is no reducible cubic passing through them. However, as soon as $7$ of the points
$(P_1, \ldots, P_8)$ lie on a conic there will be no irreducible cubic passing through the octuple (otherwise the conic and the irreducible cubic would intersect in $\geq 7$ points which contradicts Bézout's theorem).
We can still cover $\Ccal_{2,8}$ with a space $X$ which decomposes as $X=\cup_{s\in S} X^{s}$ with each  $X^ss$ isomorphic to a complement of an arrangement of tori or hyperplanes - but not in a way so that we are able show that the covering map is proper and $X$ is minimally pure. More precisely, we either get a situation where $X$ clearly is minimally pure but the map is not proper or a situation where the covering map is proper but the cohomological properties of the space $X$ are
not so easily deduced. In the latter case, it is possible to compute the cohomology groups of the
spaces $X^s$ so one possible way to go further in the pursuit of the cohomology of $\Ccal_{2,8}$ could be
to try to compute the cohomology of the union using Gysin maps. This is however likely to be rather complicated
and it seems unlikely that the space $X$ will end up minimally pure.
We remark in passing that Glynn's non-equivariant point count of $\Ccal_{2,8}$ is compatible
with $\Ccal_{2,8}$ being minimally pure but that it is still possible that an equivariant point
count could rule out minimal purity.

The above being said, we can relax the conditions in the definition of octuples in general position a bit
and still achieve minimal purity. More precisely we can allow up to $6$ of the points to lie on a conic and
we can also allow one of the points to lie on a singularity of a cubic passing through the eight points
(but we cannot allow both to happen for the same octuple). 
We can also use the techniques of the present paper to prove minimal purity for the space of
eight points in ``almost general position'' in the sense of Dolgachev and Ortland, see \cite{dolgachevortland} p. 67.
This suggests that the answer to Question~\ref{question} is likely to be rather subtle even in the simplest cases.

\subsection*{Acknowledgements}
I would like to thank Jonas Bergström, Frank Gounelas, Nathan Kaplan, Oliver Leigh and Dan Petersen for interesting discussions and
helpful comments.
\bibliographystyle{plain}

\renewcommand{\bibname}{References}

\bibliography{references} 
\end{document}